\documentclass[12pt]{article}

\usepackage{amsmath,amssymb,amsbsy,amsfonts,amsthm,latexsym,amsopn,amstext,
                            amsxtra,euscript,amscd}

\overfullrule=0pt
\numberwithin{equation}{section}
\theoremstyle{plain}
\newtheorem{theorem}{Theorem}[section]

\newtheorem{thm}[theorem]{Theorem}
\newtheorem{lem}[theorem]{Lemma}

\def\({\left(}
\def\){\right)}

\begin{document}

\title{\Large \textbf{A Summation Involving the Divisor and GCD Functions}}
\author{
{Randell  Heyman}  \\
School of Mathematics and Statistics\\ University of New South Wales \\
High Street, Kensington N.S.W. 2052 \\
Australia\\
\text {randell@unsw.edu.au}
}

\date{}
\maketitle

\begin{abstract}
Let $N$ be a positive integer. Dudek asked for an asymptotic formula for the sum of $\tau(\gcd(a,b))$ for all $a$ and $b$ with $ab \le N$. We give an asymptotic result. The approach is partly geometric and differs from the approach used in many recent gcd-sum results.
\end{abstract}

\section{Introduction}
Euclid's lemma states that if $p$ is a prime number and $p$ divides $ab$ then $p$ divides $a$ or $p$ divides $b$ (see, for example, Hardy and Wright \cite[Theorem 3]{Har}). In 2017 Dudek \cite{Dud} quantified the lemma's truth when the prime number requirement is relaxed. He suggested it would be interesting to see an asymptotic formula for
$$S(N):=\sum_{ab \le N}\tau\(\gcd(a,b)\).$$
Let $N$ be a positive integer throughout.
An asymptotic formula does indeed exist as we show in our theorem as follows:
\begin{thm}
\label{thm:main}
Let $N$ be a positive integer. Then
\begin{align*}
S(N)&=\zeta(2)N \log N +((2\gamma-1)\zeta(2)-2\theta)N+O\(\sqrt{N}\),
\end{align*}
where
 $$\theta =\sum_{d < \infty}\frac{\log d}{d^2}.$$
\end{thm}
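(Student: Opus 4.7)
The plan is to exchange the order of summation using the divisor representation $\tau(\gcd(a,b))=\sum_{d\mid a,\,d\mid b}1$. Writing $a=da'$ and $b=db'$, the condition $ab\le N$ becomes $d^2a'b'\le N$, so
$$S(N)=\sum_{d\ge 1}\#\{(a',b')\in\mathbb{Z}_{\ge 1}^2:\,a'b'\le N/d^2\}=\sum_{d\le\sqrt{N}} H(N/d^2),$$
where $H(x)=\sum_{n\le x}\tau(n)$ is the Dirichlet divisor summatory function. This is the entry point for the geometric picture: $S(N)$ counts lattice points $(d,a',b')$ in the three-dimensional region $\{d^2 a' b' \le N\}$, and the subsequent estimates amount to counting these points.

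I would then substitute the Dirichlet expansion $H(x)=x\log x+(2\gamma-1)x+\Delta(x)$ and separate the three smooth contributions
$$N\log N\sum_{d\le\sqrt{N}}\frac{1}{d^2}-2N\sum_{d\le\sqrt{N}}\frac{\log d}{d^2}+(2\gamma-1)N\sum_{d\le\sqrt{N}}\frac{1}{d^2}$$
from the error sum $E(N):=\sum_{d\le\sqrt{N}}\Delta(N/d^2)$. For the smooth part I would use the tail estimates $\sum_{d\le D}d^{-2}=\zeta(2)-1/D+O(1/D^2)$ and $\sum_{d\le D}(\log d)/d^2=\theta-(1+\log D)/D+O((\log D)/D^2)$ with $D=\sqrt{N}$; the key observation is that the two $\sqrt{N}\log N$-sized pieces coming from the $1/D$-type tails cancel exactly against each other, leaving the main terms $\zeta(2)N\log N+((2\gamma-1)\zeta(2)-2\theta)N$ together with an $O(\sqrt{N})$ residue. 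This cancellation is routine bookkeeping but must be tracked carefully.

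The substantive obstacle is the error sum $E(N)$. The classical bound $\Delta(x)\ll\sqrt{x}$ yields only $E(N)\ll\sqrt{N}\sum_{d\le\sqrt{N}}d^{-1}\ll\sqrt{N}\log N$, which is off by exactly one logarithmic factor. To reach $O(\sqrt{N})$ I would appeal to a Voronoi-type refinement $\Delta(x)\ll x^{1/3}$ (or indeed any bound of the form $\Delta(x)\ll x^{\alpha}$ with $\alpha<1/2$), which gives
$$E(N)\ll N^{1/3}\sum_{d\le\sqrt{N}} d^{-2/3}\ll N^{1/3}\cdot N^{1/6}=\sqrt{N}.$$
An alternative is to split the range at some threshold $d=D$ and use cancellation in a truncated Voronoi identity on the small-$d$ part (where $N/d^2$ is large) while bounding the large-$d$ part trivially. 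Removing this single logarithmic factor is the technically decisive step, and the three-dimensional lattice picture is what makes such a split natural.
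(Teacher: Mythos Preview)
Your proposal is correct and follows essentially the same route as the paper: rewrite $\tau(\gcd(a,b))$ as $\sum_{d\mid a,\,d\mid b}1$, swap the sums to obtain $S(N)=\sum_{d\le\sqrt{N}}\sum_{c\le N/d^2}\tau(c)$, insert the Dirichlet divisor estimate with error $O(x^{\kappa})$ for some $\kappa<1/2$, and compute the resulting partial sums (including the cancellation of the $\sqrt{N}\log N$ terms). The paper states the needed divisor-problem error as a lemma with $1/4\le\kappa<1/2$, which is exactly your observation that any exponent strictly below $1/2$ suffices to get $E(N)=O(\sqrt{N})$; your Voronoi-splitting alternative is not needed.
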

There is considerable interest in gcd-sum functions (see T\'oth \cite{Tot} and Haukkanen \cite {Hau}  for surveys). We note that results for a related summation,
$$\sum_{a \le N}\(\sum_{b=1}^a\tau(\gcd(a,b))\),$$
can be inferred from the work of Bordell\`es \cite{Bor}. Our theorem is proven differently from the works of Bordell\`es \cite{Bor}, Haukkanen \cite{Hau}, and T\'oth  \cite{Tot} in that we use geometric techniques.

\section{Notation and preparatory lemmas}
We use the cartesian plane with the normal $x$ and $y$ axes. Throughout the term `on and under the curve' will be above but not including the $x$-axis, and to the right but not including the $y$-axis.

For any integer $n\ge 1,$ we let $\tau(n)$ denote the number of divisors of $n$.
As usual the Riemann zeta function is given by
$$\zeta(s)=\sum_{j=1}^\infty \frac{1}{j^s},$$
for all complex numbers $s$ whose real part is greater than 1.
We recall that the notation $f(x) = O(g(x))$   is
equivalent to the assertion that there exists a constant $c>0$ such that $|f(x)|\le c|g(x)|$ for all $x$.
Finally we use $|A|$ to denote the cardinality of a set $A$.

We will require the following lemmas:
\begin{lem}
\label{lem:d divides}
Let $a$ and $b$ be positive integers with $ab \le N$. Then
\begin{align}
\label{eq:d divides both}
\tau(\gcd(a,b))&=\sum_{d \le \sqrt{N}}\sum_{\substack{d|a\\d|b}}1.
\end{align}
\end{lem}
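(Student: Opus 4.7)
The plan is to use the standard divisor-counting identity $\tau(n) = \sum_{d \mid n} 1$ applied to $n = \gcd(a,b)$, together with the key observation that under the constraint $ab \le N$, any common divisor of $a$ and $b$ is automatically bounded by $\sqrt{N}$, so inserting the truncation $d \le \sqrt{N}$ in the outer sum costs nothing.

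First I would write
$$\tau(\gcd(a,b)) = \sum_{d \mid \gcd(a,b)} 1 = \sum_{\substack{d \mid a \\ d \mid b}} 1,$$
using the elementary fact that the common divisors of $a$ and $b$ are precisely the divisors of $\gcd(a,b)$. At this stage the summation runs over all positive integers $d$, but only finitely many terms contribute.

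Next I would verify that every $d$ appearing in the sum already satisfies $d \le \sqrt{N}$. Indeed, if $d \mid a$ and $d \mid b$ with $a, b$ positive, then $d \le a$ and $d \le b$, so $d^2 \le ab \le N$, giving $d \le \sqrt{N}$. Consequently, inserting the restriction $d \le \sqrt{N}$ into the outer sum does not remove any nonzero terms, which yields exactly the right-hand side of \eqref{eq:d divides both}.

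There is no real obstacle here; the content is the simple inequality $d^2 \le ab \le N$ for any common divisor $d$. The lemma is essentially a repackaging of $\tau = \mathbf{1} * \mathbf{1}$ in a form convenient for later interchanging the order of summation, since truncating $d$ by $\sqrt{N}$ up front will allow the outer $d$-sum to be pulled outside the sum over pairs $(a,b)$ with $ab \le N$ in the proof of Theorem \ref{thm:main}.
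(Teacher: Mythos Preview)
Your argument is correct and is essentially the same as the paper's: both identify the divisors of $\gcd(a,b)$ with the common divisors of $a$ and $b$ and then observe that any such $d$ satisfies $d^2\le ab\le N$, so the truncation $d\le\sqrt{N}$ is harmless. The only cosmetic difference is that the paper proves the two set inclusions $\{d:d\mid\gcd(a,b)\}\subseteq\{d:d\mid a,\ d\mid b\}$ and its reverse separately, whereas you invoke the equality of these sets directly.
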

\begin{proof}
Let $\tau(\gcd(a,b))=k$ for some positive integer $k$. So
$$\tau(\gcd(a,b))=|\{d_1,\ldots,d_k : d_i |\gcd(a,b)\}|.$$
If $d_i|\gcd(a,b)$ then, by the properties of the greatest common divisor, $d|a$ and $d|b$.
Therefore $\{d_1,\ldots, d_k : d_i |\gcd(a,b)\} \subseteq \{d : d|a,d|b\}$. Also, if $d|a, d|b$ and $ab \le N$ then $d \le \sqrt{N}$. So
\begin{align}
\label{eq:less than}
\tau(\gcd(a,b))&\le \sum_{d\le \sqrt{N}}\sum_{\substack{d|a\\d|b}}1.
\end{align}
Conversely, suppose that $d_i \in \{d: d|a,d|b\}$. Then $d|a$ and $d|b$. So $d$ divides $\gcd(a,b)$ from the definition of the greatest common divisor and, as before, $d \le \sqrt{N}$. So
$$\{d_1,\ldots d_k : d_i |\gcd(a,b)\} \supseteq\{d: d|a,d|b\}.$$
Therefore
$$\tau(\gcd(a,b))\ge \sum_{d \le \sqrt{N}}\sum_{\substack{d|a\\d|b}}1,$$
which proves the lemma.
\end{proof}
The divisor summatory function have been well studied. For our purposes it will suffice to use the following (see, for example, Hardy and Wright \cite [Notes to Chapter XVIII]{Har}):
\begin{lem}
\label{lem:tau sum}
\begin{align*}
\sum_{x \le N}\tau(x)=N \log N +(2\gamma-1)N +O\(N^\kappa\),
\end{align*}
where $1/4\le\kappa< 1/2$ for sufficiently large $N$.
\end{lem}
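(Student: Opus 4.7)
The plan is to apply Dirichlet's hyperbola method. First I would swap the order of summation and recast the divisor sum as a lattice point count:
\[
\sum_{x \le N} \tau(x) \;=\; \sum_{x \le N}\sum_{d \mid x} 1 \;=\; \sum_{\substack{a,b \ge 1\\ ab \le N}} 1,
\]
so the sum equals the number of lattice points in the positive quadrant on or under the hyperbola $xy = N$. By the symmetry $a \leftrightarrow b$ of this region about the line $y=x$, I would split it into the two pieces $a \le \sqrt{N}$ and $b \le \sqrt{N}$, correcting for the double-counted square $a,b \le \sqrt{N}$:
\[
\sum_{ab \le N} 1 \;=\; 2\sum_{a \le \sqrt{N}} \lfloor N/a \rfloor \;-\; \lfloor \sqrt{N}\rfloor^2.
\]

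Next I would evaluate the inner sum using the elementary bound $\lfloor N/a \rfloor = N/a + O(1)$ together with the classical harmonic asymptotic
\[
\sum_{a \le M} \frac{1}{a} \;=\; \log M + \gamma + O\!\left(\frac{1}{M}\right)
\]
at $M = \lfloor \sqrt{N} \rfloor$. This gives $\sum_{a \le \sqrt{N}} \lfloor N/a \rfloor = \tfrac{1}{2} N \log N + \gamma N + O(\sqrt{N})$, and combining with $\lfloor \sqrt{N} \rfloor^2 = N + O(\sqrt{N})$ yields
\[
\sum_{x \le N} \tau(x) \;=\; N \log N + (2\gamma - 1)N + O(\sqrt{N}),
\]
which already matches the stated form at the endpoint $\kappa = 1/2$.

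The main obstacle is sharpening the error to a value of $\kappa$ that is \emph{strictly} less than $1/2$, since the hyperbola method is essentially tight at $\sqrt{N}$. To cross this threshold I would invoke Voronoi's treatment of $\sum_{a \le \sqrt{N}} \{N/a\}$: one expresses the fractional-part sum via a truncated Voronoi/Poisson expansion in terms of Bessel (or trigonometric) sums, and then exploits oscillatory cancellation to gain a power saving. This classically delivers $\kappa = 1/3$, comfortably inside the interval $[1/4,\,1/2)$. Any value of $\kappa$ in that range is sufficient for the present application, so I would simply quote this improvement from the notes to Chapter XVIII of Hardy and Wright \cite{Har} rather than reproduce Voronoi's analysis.
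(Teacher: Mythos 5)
Your proposal is correct, and it in fact supplies more than the paper does: the paper states this lemma without proof, quoting it directly from the notes to Chapter XVIII of Hardy and Wright \cite{Har}. Your hyperbola-method derivation is the standard elementary argument and is carried out correctly: the identity $\sum_{ab\le N}1=2\sum_{a\le\sqrt{N}}\lfloor N/a\rfloor-\lfloor\sqrt{N}\rfloor^{2}$, the harmonic-sum asymptotic $\sum_{a\le M}1/a=\log M+\gamma+O(1/M)$, and the expansion $\lfloor\sqrt{N}\rfloor^{2}=N+O(\sqrt{N})$ combine exactly as you say to give $N\log N+(2\gamma-1)N+O(\sqrt{N})$. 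You are also right to flag that the strict inequality $\kappa<1/2$ is not cosmetic here: in the paper's application the error terms $O((N/d^{2})^{\kappa})$ are summed over $d\le\sqrt{N}$, and at $\kappa=1/2$ that sum acquires an extra factor of $\log N$, so a genuine power saving is required. Deferring that saving to Voronoi's bound $O(N^{1/3}\log N)$ (hence $O(N^{\kappa})$ for any $\kappa>1/3$, comfortably inside $[1/4,1/2)$) via the same citation is exactly what the paper does implicitly; the only content either treatment leaves to the reference is the oscillation argument behind the exponent $1/3$, which is reasonable to quote rather than reproduce. The lower limit $1/4$ in the statement reflects the Hardy--Landau omega result and plays no role in the proof, so you need not address it.
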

The final lemma formalizes the key insight; every point $(x,y)$ on and under the curve $xy \le N/d^2$ contributes exactly 1 to the right hand side of \eqref{eq:integer points}.
\begin{lem}
\label{lem:integer points}
Fix both $N$ and $d \le N$ positive numbers. Then
\begin{equation}
\label{eq:integer points}
\sum_{ab \le N}\sum_{\substack{d|a\\d|b}}1=\sum_{c \le N/d^2}\tau(c).
\end{equation}
\end{lem}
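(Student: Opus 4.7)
The plan is to carry out the linear change of variables $a = d a'$, $b = d b'$. Since the inner sum restricts to pairs $(a,b)$ of positive integers with $d \mid a$ and $d \mid b$, the map $(a,b) \mapsto (a', b') = (a/d, b/d)$ is a bijection from these pairs onto all pairs of positive integers $(a',b')$. Under this map, the constraint $ab \le N$ becomes $d^2 a' b' \le N$, equivalently $a' b' \le N/d^2$. Therefore the left-hand side of \eqref{eq:integer points} is simply the number of positive integer lattice points on or under the hyperbola $xy = N/d^2$:
\begin{equation*}
\sum_{ab \le N}\sum_{\substack{d|a\\d|b}}1 \;=\; \bigl|\{(a',b')\in\mathbb{Z}_{>0}^2 \,:\, a'b' \le N/d^2\}\bigr|.
\end{equation*}

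Next I would group these lattice points according to the value of the product $c := a'b'$. For each positive integer $c$, the number of ordered pairs $(a',b')$ of positive integers satisfying $a'b' = c$ is exactly $\tau(c)$, since each such pair is determined by the choice of the divisor $a'$ of $c$. Summing over $c$ ranging from $1$ up to $\lfloor N/d^2 \rfloor$ accounts for every lattice point on or under the hyperbola exactly once, giving
\begin{equation*}
\bigl|\{(a',b')\in\mathbb{Z}_{>0}^2 \,:\, a'b' \le N/d^2\}\bigr| \;=\; \sum_{c \le N/d^2}\tau(c),
\end{equation*}
which completes the identification and establishes \eqref{eq:integer points}.

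There is no real obstacle here: the lemma is essentially bookkeeping made precise by a scaling of the lattice and the classical observation that the divisor summatory function counts lattice points under a hyperbola. The only point that needs a line of care is checking that the substitution $(a,b) \leftrightarrow (a',b')$ is a genuine bijection between the two sets being counted, which follows immediately from the divisibility hypothesis in the inner sum.
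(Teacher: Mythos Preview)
Your proof is correct and follows essentially the same approach as the paper: both substitute $a=da'$, $b=db'$ to reduce the left-hand side to a count of lattice points $(a',b')$ under the hyperbola $xy=N/d^2$, and then identify that count with $\sum_{c\le N/d^2}\tau(c)$. If anything, your presentation via grouping by the product $c=a'b'$ is slightly more transparent than the paper's set-theoretic phrasing.
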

\begin{proof}
Let $$J=\{(a,b):ab \le N , d|a, d|b\}$$ and
$$K=\{r: r|c\text{ for some }c \le N/d^2\}.$$
It will suffice to show that $|J|=|K|$.
Suppose $(a,b) \in J$. So $a=rd$ and $b=sd$ where $r$ and $s$ are both positive integers.
Since $ab \le N$ we have $1 \le  rsd^2 \le N$ and so $1 \le rs \le N/d^2$. So the point $(r,s)$ is an integer point on and under the curve $xy=N/d^2$. Thus $r$ divides some $c$ with $1 \le c \le N/d^2$. So $r \in K$ from which it follows that $|J| \le |K|$. The argument can be reversed. This proves the lemma.
\end{proof}

\section{Proof of Theorem \ref{thm:main}}
Using Lemma \ref{lem:d divides} we have
\begin{align}
\label{eq:d divides a and b}
S(N)&=\sum_{ab \le N} \sum_{d\le \sqrt{N}}\sum_{\substack{d|a\\d|b}}1\notag\\
&= \sum_{d\le \sqrt{N}}\sum_{ab \le N} \sum_{\substack{d|a\\d|b}}1.
\end{align}
From Lemma \ref{lem:integer points} we have, for a fixed $d$, that
$$\sum_{ab \le N}\sum_{\substack{d|a\\d|b}}1=\sum_{c \le N/d^2} \tau(c).$$
Substituting into \eqref{eq:d divides a and b} and then using Lemma \ref{lem:tau sum} we obtain
\begin{align}
\label{eq:main 0}
S(N)&=\sum_{d \le \sqrt{N}}\sum_{c \le N/d^2} \tau(c)\notag\\
&=\sum_{d\le \sqrt{N}}\(\frac{N}{d^2} \log \(\frac{N}{d^2}\) +\frac{(2\gamma-1)N}{d^2} +O\(\(\frac{N}{d^2}\)^{\kappa}\)\)\notag\\
&=\(N\log N+(2\gamma-1)N\)\sum_{d \le \sqrt{N}} \frac{1}{d^2}-2N \sum_{d \le \sqrt{N}} \frac{\log d}{d^2} +\sum_{d \le \sqrt{N}}O\(\(\frac{N}{d^2}\)^{\kappa}\).
\end{align}
We point out that we have introduced some inefficiency here.
We would expect the actual error terms to average out over the summation.
But we have resorted to summing upper bounds.
Next (see, for example, Apostol \cite[Theorem 3.2(b)]{Apo}) we have
$$\sum_{d \le \sqrt{N}}\frac{1}{d^2}=\zeta(2)-\frac{1}{\sqrt{N}}+O\(\frac{1}{N}\).$$
So
\begin{align}
\label{eq:main 1}
\(N\log N+(2\gamma-1)N\)\sum_{d \le N} \frac{1}{d^2}&=\(N \log N+(2\gamma-1)N\)\(\zeta(2)-\frac{1}{\sqrt{N}}+O\(\frac{1}{N}\)\)\notag\\
&=\zeta(2)N \log N-\sqrt{N}\log N+(2\gamma-1)\zeta(2)N+O\(\log N\).
\end{align}
Next, since we have absolute convergence,
\begin{align*}
-2N \sum_{d \le \sqrt{N}} \frac{\log d}{d^2}&=-2N\(\sum_{d < \infty}\frac{\log d}{d^2}-\sum_{d>\sqrt{N}}\frac{\log d}{d^2}\).
\end{align*}
Recall that $$\theta =\sum_{d < \infty}\frac{\log d}{d^2} \text{ throughout.}$$
Then, using Euler's summation formula, we have
\begin{align*}
\sum_{d>\sqrt{N}}\frac{\log d}{d^2}&=\frac{\log N}{2\sqrt{N}}+\frac{1}{\sqrt{N}}+O\(\frac{\log N}{N}\).
\end{align*}
So
\begin{align*}
-2N \sum_{d \le \sqrt{N}} \frac{\log d}{d^2}&=-2N\theta+\sqrt{N}\log N +2\sqrt{N}+O(\log N).
\end{align*}
Finally, using Apostol \cite[Theorem 3.2]{Apo}, we have
\begin{align}
\label{eq:main 2}
\sum_{d \le \sqrt{N}}O\(\(\frac{N}{d^2}\)^{\kappa}\)&=N^{\kappa}O\(\sum_{d \le \sqrt{N}}\frac{1}{d^{2\kappa}}\)\notag\\
&=N^{\kappa}O\(N^{1/2-\kappa}\)\notag\\
&=O\(\sqrt{N}\).
\end{align}
Substituting \eqref{eq:main 1} and \eqref{eq:main 2} into \eqref{eq:main 0} completes the proof.

\section{Acknowledgment}
The author thanks the editor and referee for their suggestions which, amongst other things, led to an improvement in the error term.

\hrule
\bigskip
Keywords: arithmetic function, divisor function, greatest common divisor.

Mathematical Subject Classification 2010: 11N56
\bigskip
\hrule
\bigskip
Concerned with A2268732.
\bigskip
\hrule

\end{document}